\newcommand{\inner}[2]{\langle{#1},{#2}\rangle}
\newcommand{\Inner}[2]{\left\langle{#1},{#2}\right\rangle}
\newcommand{\tos}{\rightrightarrows} % point-to-set mappings
\newcommand{\ovl}{\overline}
\newtheorem{theorem}{Theorem}[section]
\DeclareMathOperator{\graph}{gra}
\begin{document}
\title{A non-type (D) linear isometry}
\author{ O. Bueno\thanks{IMPA, Estrada Dona Castorina 110,
    22460-320 Rio de Janeiro, Brazil ({\tt obueno@impa.br}) \ Partially
    supported by CAPES.}
\and
B. F. Svaiter \thanks{IMPA, Estrada Dona Castorina 110,
    22460-320 Rio de Janeiro, Brazil ({\tt benar@impa.br}) \ Partially
    supported by CNPq grants 300755/2005-8, 475647/2006-8 and by
    PRONEX-Optimization.}}
\date{\today}
\maketitle

\begin{abstract}
Previous constructions of non-type (D) maximal monotone operators were based
on the non-type (D) operators introduced by Gossez, and 
the construction of such operators or the proof that they were non-type (D)
were not straightforward.
The aim of this paper is to present a very simple non-type (D) linear isometry.

\medskip
\noindent  \textbf{2010 Mathematics Subject Classification:} 47H05, 49J52, 47N10.

\medskip

\noindent  {\bfseries Keywords:} monotone operators, non-reflexive Banach spaces, type (D)
\end{abstract}
\pagestyle{plain}

\section{Introduction}

Maximal Monotone operators were defined and used in the early sixties
as a theoretical framework for the study of electrical networks, and, later on,
for the study of non-linear partial differential equations.
The first works on monotone operators were due to
Zarantonello~\cite{zaran1960}, Minty~\cite{Minty60}, Kato~\cite{Kato64},
Browder~\cite{Browder68}, Rockafellar~\cite{Rock69-LB}, 
Br\'ezis~\cite{Brezis73}, among others. %Rockafellar?
Since then, monotone operators were object of intense study. See \cite{Bor10}
for a survey on the subject.

Fitzpatrick proved that any maximal monotone operator can be represented by
convex functions by providing the explicit formula for one of these functions.
Therefore, is quite natural to ask the inverse question: under which conditions
a convex function represents a maximal monotone operator.
Characterizations of the convex functions which represent such
operators in reflexive Banach spaces where presented in \cite{BS}. In
non-reflexive Banach spaces, a characterization of convex functions
which represents a sub-class of maximal monotone operators (those of
type (D)) where presented in \cite{BM1}.  In the non-reflexive space
$\ell^1$, non-type (D) maximal monotone operators where presented by
Gossez in \cite{JPGos3,JPGos2}. Since then, examples of non-type (D)
(maximal monotone) operators in $c_0$ where presented in
\cite{SvaBue11-2}, and such examples in James spaces where presented
in \cite{BBWY2011}. So far, the examples of non-type (D) maximal
monotone operators were constructed using always Gossez's original
examples and ideas in their construction.

The technicalities presented in the construction of previous
non-type (D) operator may lead one to believe that operators of
this type are inherently complex and/or pathological.
Our aim is to present very simple linear isometry which happens to
be a non-type (D) maximal monotone operator.

\section{Notation and basics definitions}
\label{sec:nbd}

Let $A$ and $B$ be arbitrary sets.
A point-to-set operator $T:A\tos B$  of $A$ in $B$, is a triple
$(A,B,G)$ 
where $G\subset A\times B$. The set $G$, called the \emph{graph} of $T$,
is denoted as
\[
\graph(T)=G
\]
and, for $a\in A$ 
\[
T(a)=\{b\;|\; (a,b)\in \graph(T)\}
\]
Hence, $T:A\tos B$ may also be regarded as a map of $A$ in to $\wp(B)$, the power set of $B$,
and be denoted as $T:A\to\wp(B)$.
From now on we identify a map
$F:A\to B$ with the operator $F:A\tos B$ with the same graph.

Let $X$ be a real Banach space, with topological dual $X^*$.
We will use the notation
\[
\inner{x}{x^*}=\inner{x^*}{x}=x^*(x),\qquad \forall x\in X,x^*\in X^*.
\]
The weak-$*$ topology of $X^*$ is the smallest topology (in $X^*$), in
which the maps $X^*\ni x^*\mapsto\inner{x}{x^*}$ are continuous for
each $x\in X$.
The bidual of $X$ is $X^{**}=(X^*)^*$ and the canonical injection of
$X$ in to $X^{**}$ is
\[
J:X\to X^{**}, \qquad \inner{x^*}{J(x)}=\inner{x^*}{x}\;\forall x^*\in X^*.
\]
Note that this map is a linear isometry.  From now on, we identify $X$
with its image under the canonical injection of $X$ into $X^{**}$. The
space $X$ is \emph{non-reflexive} if $J$ is not onto, which under the
above convention means $X\subsetneq X^{**}$.

A point-to-set operator $T:X\tos X^*$ (respectively, $T:X^*\tos X$) is
monotone if for any $(x,x^*),(y,y^*)\in \graph(T)$ (respectively, for
any $(x^*,x),(y^*,y)\in\graph(T)$)
\[
\inner{x-y}{x^*-y^*}\geq 0
\]
and is \emph{maximal monotone} if it is monotone and its graph is
maximal, with respect to the partial order of inclusion, in the family
of graphs of maximal monotone operators from $X$ to $X^*$
(respectively, from $X^*$ to $X$)

Define, for $T:X\tos X^*$ the operator $\ovl T:X^{**}\tos X^*$ which
graph is given by the limits, in the weak-$*\times$strong topology of
$X^{**}\times X^*$, of \emph{bounded} nets of elements in the graph of
$T$.

A maximal monotone operator $T:X\tos X^*$ is of type (D)~\cite{JPGos0},
if every point $(x^{**},x^*)\in X^{**}\times X^*$ such that
\[
\inner{x^*-y^*}{x^{**}-y}\geq 0,\quad\forall\,(y,y^*)\in\graph(T),
\]
is contained in the graph of $\ovl{T}$. This is equivalent to the fact of $\ovl T:X^{**}\tos X^*$ being maximal monotone. 
\section{A self-canceling non-type (D) maximal monotone linear isometry}

The next theorem is our main result.

\begin{theorem}
 \label{th:ce}
 Let $X$ be a non-reflexive real Banach space. The operator
 \[
 T:X\times X^*\to X^{*}\times X^{**}, \qquad T(x,x^*)=(-x^*,x)
 \]
 is a non-type (D) maximal monotone linear isometry with infinitely
 many maximal monotone extensions to $X^{**}\times X^{***}\tos
 X^*\times X^{**}$.
\end{theorem}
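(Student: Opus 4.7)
The plan is to split the theorem into four separate claims. Throughout, write $Y = X \times X^*$ and identify $Y^* = X^* \times X^{**}$ via $\langle (x,x^*), (y^*, y^{**})\rangle = \langle x, y^*\rangle + \langle y^{**}, x^*\rangle$, so that $T: Y \to Y^*$ is a linear map between a Banach space and its dual.

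\emph{Linearity, isometry, monotonicity.} Linearity is immediate. Equipping both products with the max-norm, the isometric property reduces to $J: X \to X^{**}$ being isometric. Monotonicity holds in the stronger ``self-cancelling'' form: direct expansion gives $\langle (x, x^*) - (y, y^*),\,T(x, x^*) - T(y, y^*)\rangle = -(x^*-y^*)(x-y) + (x^*-y^*)(x-y) = 0$.

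\emph{Maximality.} Suppose $((u, u^*), (v^*, v^{**})) \in Y \times Y^*$ is monotonically related to $\graph(T)$. Expanding the inequality, the bilinear (quadratic-in-$(y, y^*)$) part cancels, leaving an affine functional of $(y, y^*)$ that must be nonnegative on all of $X \times X^*$; this forces the linear coefficients to vanish, yielding $v^* = -u^*$ and $v^{**} = u$, so the point is already in $\graph(T)$.

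\emph{Non-type (D).} First identify $\graph(\overline T)$: for a bounded net $(y_\alpha, y_\alpha^*) \in Y$, strong convergence of $(-y_\alpha^*, y_\alpha)$ in $Y^*$ forces $y_\alpha$ to converge strongly in $X^{**}$, hence its limit lies in the closed subspace $X$; matching with weak-$*$ limits in $Y^{**}$ yields $\graph(\overline T) = \{((x, x^*), (-x^*, x)) : x \in X,\, x^* \in X^*\}$. Now pick any $\eta \in X^{**} \setminus X$, which exists by non-reflexivity. The same self-cancellation computation shows $((\eta, 0), (0, \eta)) \in Y^{**} \times Y^*$ is monotonically related to $\graph(T)$, but it lies outside $\graph(\overline T)$ because $\eta \notin X$. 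Thus $T$ is not of type (D).

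\emph{Infinitely many extensions.} Repeating the maximality expansion at the level $Y^{**} \times Y^*$ characterizes monotonically-related pairs: $v^{**} = u^{**}$, $v^* = -u^{***}|_X$, and the component of $u^{***}$ annihilating $X$ evaluated at $u^{**}$ is $\geq 0$. For each nonzero $\xi^{***} \in X^{***}$ with $\xi^{***}|_X = 0$ (which exists by non-reflexivity), Zorn's lemma produces a maximal monotone extension of $T$ containing a chosen witness pair $((\eta, \xi^{***}), (0, \eta))$ with $\xi^{***}(\eta) > 0$. The main obstacle is to show that varying $\xi^{***}$ and $\eta$ yields \emph{genuinely distinct} maximal extensions; the key calculation is that for independent directions, witness pairs $((\eta_1, \xi_1^{***}), (0, \eta_1))$ and $((\eta_2, \xi_2^{***}), (0, \eta_2))$ can be made monotonically incompatible by choosing $\eta_1, \eta_2$ so that $(\xi_1^{***}-\xi_2^{***})(\eta_1-\eta_2) < 0$, so they cannot coexist in a single monotone extension---this yields infinitely many distinct maximal monotone extensions.
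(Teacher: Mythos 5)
Your first three steps are correct. The maximality argument (expand the monotonicity inequality, observe that the quadratic terms cancel, and force the remaining affine functional of $(y,y^*)$ to vanish identically) is a self-contained alternative to the paper's one-line appeal to the fact that a continuous monotone map on the whole space is maximal monotone; your identification of $\graph(\ovl T)=\graph(T)$ is the same argument as the paper's; and your witness $((\eta,0),(0,\eta))$ for non-type (D) is in fact slightly simpler than the paper's, since it requires no functional from $X^{***}$.

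The gap is in the last step, which you yourself flag as ``the main obstacle'' and then do not close. There are two problems. First, ``independent directions'' $\xi^{***}\in X^{***}$ with $\xi^{***}|_X=0$ need not exist in quantity: the annihilator of $X$ in $X^{***}$ is isometric to $(X^{**}/X)^*$, which can be one-dimensional (e.g.\ when $X$ is the James space), so any scheme requiring infinitely many linearly independent such $\xi^{***}$ fails in general. Second, even granting a supply of admissible witness pairs, pairwise incompatibility of two chosen pairs does not by itself yield an \emph{infinite} family that is \emph{pairwise} incompatible; one must be exhibited. Both problems are resolved at once by the paper's construction: fix a single $x_0^{**}\in X^{**}\setminus X$ and a single $w_0\in X^{***}$ with $w_0|_X=0$ and $\inner{x_0^{**}}{w_0}=1$, and couple the two components reciprocally, $p_t=(tx_0^{**},(1/t)w_0)$, $q_t=(0,tx_0^{**})$ for $t>0$. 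Each $(p_t,q_t)$ is monotonically related to $\graph(T)$ (in your characterization the annihilating part evaluates to $\inner{tx_0^{**}}{(1/t)w_0}=1>0$), while for $t\neq s$ one has $\inner{p_t-p_s}{q_t-q_s}=(t-s)(1/t-1/s)=-(t-s)^2/(ts)<0$, so the whole family is pairwise incompatible and the maximal monotone extensions $B_t\supset\graph(T)\cup\{(p_t,q_t)\}$ are pairwise distinct. Note that varying only $\eta$ with $\xi^{***}$ fixed, or only $\xi^{***}$ along a ray with $\eta$ fixed, makes the incompatibility product equal to $0$, so the reciprocal scaling of the two components is essential; your sketch as written does not supply it.
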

\begin{proof}
  It follows trivially from its definition that $T$ is a linear
  monotone isometry.
  Since $T$ is a continuous monotone map, it is maximal monotone.

  Take $(p,p^*)\in \graph \ovl T$. This means that there exists a
  bounded net of elements in the graph of $T$
  \[
  \big\{ \left((x_i,x^*_i),(y^*_i,y^{**}_i)\right)\big\}_{i\in I}
  \]
  which converges in the weak-$*\times$strong topology to $(p,p^*)$.
  Using the definition of $T$ we have
  \[
  x^*_i=-y^*_i,\qquad y^{**}_i=x_i.
  \]
  Since $\{(y^*_i,y^{**}_i) \}$ converges in the norm topology,
  $\{(x_i,x^*_i)\}$ also converges in the strong topology and its
  limits belong to $X\times X^*$.  Therefore for some $x\in X$,
  $x^*\in X^*$,
  \[ p=(x,x^*)\in X\times X^*,\qquad p^*=(-x^*,x)
  \]
  Using again the definition of $T$ we have $(p,p^*)\in \graph(T)$. Altogether we proved
  that
  \[
  \graph(\ovl T)=\graph(T).
  \]

  Now we will prove that $\ovl T:X^{***}\times X^{**}\tos X^*\times
  X^{**}$ has infinitely many maximal monotone extensions.  Since $X$
  is non-reflexive and a closed subspace of $X^{**}$, there exist
  $x_0^{**}\in X^{**}\setminus X$ and $w_0\in X^{***}$ such that
  \begin{equation}
    \label{eq:xw}
   \inner{x^{**}_0}{w_0}=1,
   \qquad w_0(x)=0,\;\forall x\in X
  \end{equation}
  Define, for each $t\in (0,\infty)$,
  \[
  p_t=
  (t x^{**}_0,(1/t)w_0),\qquad q_t=(0,tx^{**}_0) .
  \]
  Take $(x,x^*)\in X\times X^*$.
  Direct calculation yields:
  \begin{align*}
    \inner{(x,x^*)-p_t}{(-x^*,x)-q_t}&=\inner{(x-tx_0^{**},x^*-(1/t)w_0)}{(-x^*,x-tx_0^{**})}\\
    &=\inner{x-tx_0^{**}}{-x^*}+\inner{x^*-(1/t)w_0}{x-tx_0^{**}}=1
  \end{align*}
  where the last equality follows from \eqref{eq:xw}.
  Therefore, the operator $A_t:X^{**}\times X^{***}\tos X^{*}\times X^{**}$
  with graph 
  \[
  \graph(A_t)=\graph{T}\cup\{(p_n,q_t)\}.
  \]
  is monotone.
  Hence, for each $t>0$, there exists
  a maximal monotone extension of $A_t$, say $B_t$. If $t,s>0$, $t\neq s$, then
  \begin{align*}
  \inner{p_t-p_s}{q_t-q_s}&= \Inner{\bigg((t-s)x^{**}_0,(1/t-1/s)w_0\bigg)}
                       {\bigg(0,(t-s)x_0^{**}\bigg)}\\
                       &=(t-s)(1/t-1/s) =-\frac{(t-s)^2}{ts}<0
  \end{align*}
  which prove that $(p_t,q_t)\in B_t\setminus B_s$
  and, in particular $B_t\neq B_s$.
\end{proof}

Observe that \emph{if} $X$ is a James's space, then $X\times X^*$ is a
non-reflexive Banach spaces which is isometric to its dual. Indeed, if
$A:X\to X^{**}$ is an isometry, then 
\[
\mathbb{A}:X\times X^*\to X^*\times X^{**},\qquad \mathbb{A}(x,x^*)=
(x^*, A(x))
\]
is such an isometry.

\end{document}